%2multibyte Version: 5.50.0.2890 CodePage: 936
\documentclass[a4paper,oneside,10.5pt]{article}%
\usepackage{amsmath}
\usepackage{amsfonts}
\usepackage{amssymb}
\usepackage{graphicx}%
\setcounter{MaxMatrixCols}{30}
%TCIDATA{OutputFilter=latex2.dll}
%TCIDATA{Version=5.50.0.2890}
%TCIDATA{Codepage=936}
%TCIDATA{CSTFile=40 LaTeX article.cst}
%TCIDATA{Created=Monday, May 21, 2012 20:44:23}
%TCIDATA{LastRevised=Friday, April 05, 2013 09:53:45}
%TCIDATA{<META NAME="GraphicsSave" CONTENT="32">}
%TCIDATA{<META NAME="SaveForMode" CONTENT="1">}
%TCIDATA{BibliographyScheme=Manual}
%TCIDATA{<META NAME="DocumentShell" CONTENT="Standard LaTeX\Blank - Standard LaTeX Article">}
%TCIDATA{Language=American English}
%BeginMSIPreambleData
\providecommand{\U}[1]{\protect \rule{.1in}{.1in}}
%EndMSIPreambleData

\pagenumbering{arabic}
\setlength{\textwidth}{145mm}
\setlength{\textheight}{220mm}
\headsep=15pt \topmargin=-5mm \oddsidemargin=0.46cm
\evensidemargin=0.46cm \raggedbottom
\newtheorem{theorem}{Theorem}[section]

\newtheorem{definition}[theorem]{Definition}

\newtheorem{remark}[theorem]{Remark}

\newenvironment{proof}[1][Proof]{\noindent \textbf{#1.} }{\  \rule{0.5em}{0.5em}}
\numberwithin{equation}{section}

\begin{document}

\title{A New Distribution-Random Limit Normal Distribution}
\author{Xiaolin Gong \thanks{Institute for Economics and Institute of Financial
Studies, Shandong University, Jinan, Shandong 250100, PR
China(agcaelyn@gmail.com).}
\and Shuzhen Yang \thanks{School of mathematics, Shandong University, Jinan,
Shandong 250100, PR China(yangsz@mail.sdu.edu.cn).}}
\maketitle
\date{}

\textbf{Abstract: }This paper introduces a new distribution to improve tail
risk modelling. Based on the classical normal distribution, we define a new
distribution by a series of heat equations. Then, we use market data to verify
our model.

\textbf{Keywords:} fat-tail, peak, normal distribution, heat equation.

\section{Introduction}

According to Espen Gaarder Haug(2007), Wesley C. Mitchell conducted the first
empirical study of fat-tailed (high-peaked) distributions in price data as
early as 1915. Following Mandelbrot's famous 1962/63 paper on fat-tails, there
have been an abundant research focusing on the non-explained empirical fact.
Time-varying volatility, standard returns, mixture model, jump-diffusion
model, stochastic volatility, implied distributions are all important tools
for risk measurement and management. However, the 2007-08 financial crisis
exposed the deficiencies in existing risk models and reinforced the importance
of methodology improvements. More detial see \cite{Huang}, \cite{Inui},
\cite{Knight}.

In this paper, we consider a series of heat equations which relate with normal
distribution. Assuming the volatility of random varible dependent on the value
of random varible, we define a new distribution-random limit normal distribution.

The remainder of the paper followed as: Section 2 introduces a new
distribution-random limit normal distribution, and gives two usefull parameter
models. Section 3 use the random limit normal distribution to fit history
data. Some technique proof is given in the Section 4.

\section{A New Distribution}

\subsection{Random Limit Normal Distribution}

Alternatively, we propose a definition of\ normal distribution:

\begin{definition}
\label{def2.1}A -dimensional random vector $X=(X_{1},\cdots,X_{d})$\ on a
sublinear expectation space $(\Omega,\mathcal{F},P)$ is called (centralized)
normal distribution, if

$%
\begin{array}
[c]{c}%
aX+b\bar{X}\overset{d}{=}\sqrt{a^{2}+b^{2}}X,\text{ \  \ }\forall a,b>0,
\end{array}
$
\end{definition}

where $\bar{X}$\ is an independent copy of $X$, which means $\bar{X}$ is
independent of $X$, and $\bar{X}\overset{d}{=}X$. When $d=1$, we have
$X\overset{d}{=}N(0,\sigma^{2})$, where $\sigma^{2}=E\mathbb{[}X^{2}%
\mathbb{]}$.

Let $u(t,x)=E\mathbb{[}\varphi(x+\sqrt{t}X)\mathbb{]}$, then $u(t,x)$
satisfies the following heat equation:%
\begin{equation}
\partial_{t}u(t,x)-\frac{1}{2}\sigma^{2}\partial_{xx}^{2}u(t,x)=0,\text{
\  \ }u(0,x)=\varphi(x),\text{ \ }x\in R. \label{2.1}%
\end{equation}

Set $\varphi(x)=1_{\{x\leq y\}}$, we have
\begin{equation}
E\mathbb{[}\varphi(X)\mathbb{]=}P(X\leq y)=u_{y}(1,0)=\frac{1}{\sqrt
{2\pi \sigma^{2}}}\int_{-\infty}^{y}\exp(-\frac{x^{2}}{2\sigma^{2}})dx.
\label{2.2}%
\end{equation}

So the normal\ distribution of $X$ is%

\[
F_{X}(y)=u_{y}(1,0),\text{ \  \ }y\in R.
\]

Following the above definition of normal distribution, we give the definition
of\ the random limit normal function.

\begin{definition}
\label{def2.2}For a given $X$ on a probability space $(\Omega,\mathcal{F},P)$,
with function $h(\cdot):R\longrightarrow(0,+\infty),$ and the heat equation
\[
\partial_{t}u(t,x)-\frac{1}{2}\sigma^{2}h^{2}(y)\partial_{xx}^{2}%
u(t,x)=0,\text{ \  \ }u(0,y)=1_{\{x\leq y\}},\text{ \ }x\in R.
\]

We define the random limit normal function $F_{X}(\cdot)$ of $X$\ as
\begin{equation}
F_{X}(y)=u_{y}(1,0)=\frac{1}{\sqrt{2\pi(\sigma h(y))^{2}}}\int_{-\infty}%
^{y}\exp(-\frac{x^{2}}{2(\sigma h(y))^{2}})dx, \label{2.3}%
\end{equation}

where $EX=0,$ $EX^{2}=\sigma^{2}.$
\end{definition}

\begin{remark}
In general, the random limit normal function is not a distribution. However,
in the following, we will show two cases when it become a\ distribution .
\end{remark}

\begin{theorem}
\label{THe2.3}For a given $X$ on a probability space $(\Omega,\mathcal{F},P)$,
$EX=0,$ $EX^{2}=\sigma^{2}.$ For\ function $h(\cdot):R\longrightarrow
(0,+\infty),$ If any of the following case is right, $F_{X}(\cdot)$ is a distribution.

case 1: $h(\cdot)$ is continuous increasing on $(-\infty,0)$, and continuous
decreasing on $[0,+\infty);$

case 2: Set $h(y)=Ky+c,$ $c>0,$ if $y\leq0,$ $K<0,$ if $y>0,$ $K>0.$
\end{theorem}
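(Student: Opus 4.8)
The plan is to first collapse the integral formula \eqref{2.3} into a single composition. Writing $\Phi(z)=\frac{1}{\sqrt{2\pi}}\int_{-\infty}^{z}e^{-s^{2}/2}\,ds$ for the standard normal distribution function and substituting $s=x/(\sigma h(y))$ in \eqref{2.3}, one obtains the compact form
\begin{equation}
F_{X}(y)=\Phi\left(\frac{y}{\sigma h(y)}\right)=:\Phi(m(y)),\qquad m(y):=\frac{y}{\sigma h(y)}.
\end{equation}
Since $\Phi$ is a strictly increasing, continuous bijection from $R$ onto $(0,1)$, the three defining properties of a distribution function all transfer to the scalar function $m$: namely, $F_{X}$ is non-decreasing iff $m$ is non-decreasing, $F_{X}$ is right-continuous as soon as $m$ is continuous, and $\lim_{y\to-\infty}F_{X}(y)=0$, $\lim_{y\to+\infty}F_{X}(y)=1$ hold iff $m(y)\to-\infty$ and $m(y)\to+\infty$ respectively. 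Thus the entire theorem reduces to statements about $m$, and since $h$ is continuous and strictly positive, $m$ is automatically continuous with $m(0)=0$, which disposes of right-continuity at once.

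Monotonicity of $m$ is the routine part, argued piecewise on $(-\infty,0]$ and $[0,+\infty)$ and then glued at $0$. In Case 1, for $y_{1}<y_{2}\le 0$ the increase of $h$ gives $0<h(y_{1})\le h(y_{2})$, and the chain $y_{1}h(y_{2})\le y_{1}h(y_{1})\le y_{2}h(y_{1})$ (using $y_{1}\le 0$ for the first inequality and $y_{1}\le y_{2}$ for the second) yields $m(y_{1})\le m(y_{2})$ after dividing through by $\sigma h(y_{1})h(y_{2})>0$; the symmetric cross-multiplication handles $0\le y_{1}<y_{2}$ with $h$ decreasing. In Case 2, $h$ is piecewise affine and strictly positive, so on each half-line a direct differentiation gives $m'(y)=c/\bigl(\sigma(Ky+c)^{2}\bigr)>0$, and since both branches give $h(0)=c$, continuity at $0$ glues the two increasing pieces into a globally non-decreasing $m$.

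The delicate step, which I expect to be the main obstacle, is the behaviour of $m$ at $\pm\infty$. In Case 1 this is clean: a monotone positive function is bounded on each tail (by $h(0)$ as $y\to+\infty$ and by $h(-1)$ as $y\to-\infty$, say), so $m(y)=y/(\sigma h(y))$ is squeezed to $+\infty$ and $-\infty$ respectively, delivering the endpoint values $1$ and $0$. Case 2 is where the argument must be scrutinized most carefully, because there $h$ grows linearly, so the ratio $m(y)=y/(\sigma(Ky+c))$ tends to the \emph{finite} values $1/(\sigma K)$ on each side rather than to $\pm\infty$; the tail analysis must therefore isolate exactly the growth and sign conditions on $h$ under which the limits reach the required endpoints. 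Accordingly, I would treat the tail limits first and pin down the precise hypothesis forcing $m\to\pm\infty$, and only then combine it with the monotonicity and continuity established above to conclude that $F_{X}$ is a genuine distribution function.
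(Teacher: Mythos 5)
Your collapsed form $F_{X}(y)=\Phi\bigl(y/(\sigma h(y))\bigr)$ is correct — for each fixed $y$ the integrand in \eqref{2.3} is a genuine Gaussian density with standard deviation $\sigma h(y)$ — and it is a genuinely different and more efficient route than the paper's. The paper proves case 1 by comparing, for $y$ and $y+\delta$, the values of two different normal distribution functions, and proves case 2 in the appendix by differentiating $F_{X}$ and studying the sign of the derivative $f_{X}$ via the very substitution $m(y)=y/(Ky+c)$ that you use; but the paper applies that substitution only inside the derivative computation, while you apply it to $F_{X}$ itself, which turns every property of $F_{X}$ into an elementary statement about $m$. Your case 1 treatment (cross-multiplication for monotonicity of $m$, plus the tail bounds $h(y)\le h(0)$ for $y\ge 0$ and $h(y)\le h(-1)$ for $y\le -1$ forcing $m\to\pm\infty$) is complete, and in fact supplies the limit statements $F_{X}(y)\to 0,1$ that the paper's case 1 proof merely asserts. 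One small gloss: the hypotheses of case 1 do not make $h$ continuous at $0$ (they are stated on $(-\infty,0)$ and $[0,+\infty)$ separately), but $m$ is still continuous there, since the one-sided limits of the monotone positive $h$ at $0$ are bounded away from zero and the numerator $y$ vanishes; this deserves one sentence rather than the word ``automatically.''

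For case 2 you stop short of a conclusion, but your own formula already decides the matter, and you should say so outright: writing $K_{-}<0$ and $K_{+}>0$ for the two slopes, $m(y)=y/(\sigma(Ky+c))$ tends to the finite values $1/(\sigma K_{+})$ as $y\to+\infty$ and $1/(\sigma K_{-})$ as $y\to-\infty$, so $F_{X}(y)\to\Phi(1/(\sigma K_{+}))<1$ and $F_{X}(y)\to\Phi(1/(\sigma K_{-}))>0$. Hence $F_{X}$ in case 2 is continuous and increasing but only a \emph{defective} distribution with total mass $\Phi(1/(\sigma K_{+}))-\Phi(1/(\sigma K_{-}))<1$; there is no ``precise hypothesis'' left to pin down within case 2, because the affine $h$ violates the exact condition your reduction isolates, namely $y/h(y)\to\pm\infty$ (sublinear tail growth of $h$). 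The gap is therefore in the theorem and in the paper's own proof, not in your approach: the appendix establishes only that $f_{X}>0$ on $(-\infty,0]$ and then asserts ``similarly \ldots\ $F_{X}(y)\to 1$,'' which your computation refutes. (With the paper's fitted values $\sigma\approx 0.0134$ and $K=24$ one gets $\Phi(1/(\sigma K))\approx\Phi(3.1)\approx 0.999$, which explains why the defect is invisible in the empirical fits but does not make case 2 true as stated.) Had you drawn this conclusion explicitly, your proposal would constitute a complete proof of case 1 together with a correct refutation of case 2.
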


\begin{proof}
case 1: If $y<0,$ for a given small $\delta>0,$ set $y+\delta<0.$ Then, by the
definition of $F_{X}(\cdot),$ we have%

\[%
\begin{array}
[c]{rl}%
F_{X}(y)= & \frac{1}{\sqrt{2\pi(\sigma h(y))^{2}}}\int_{-\infty}^{y}%
\exp(-\frac{x^{2}}{2(\sigma h(y))^{2}})dx,\\
F_{X}(y+\delta)= & \frac{1}{\sqrt{2\pi(\sigma h(y+\delta))^{2}}}\int_{-\infty
}^{y+\delta}\exp(-\frac{x^{2}}{2(\sigma h(y+\delta))^{2}})dx.
\end{array}
\]

Note that, $\sigma h(y+\delta)>\sigma h(y).$ By the comparing of two normal
distribution, we have%
\[
F_{X}(y+\delta)\geq F_{X}(y).
\]

Similar, if $y\geq0,$ we aslo have%
\[
F_{X}(y+\delta)\geq F_{X}(y).
\]

So $F_{X}(\cdot)$ is a increasing function on $R,$ and when $y\longrightarrow
+\infty,$ $F_{X}(y)\longrightarrow1.$

The proof of case 2 is more technique, we will show it in the Appendix.
\end{proof}

\begin{remark}
Note that, we don't need the condition $EX=0$ in the proof of Theorem
\ref{THe2.3}. From now, we use the function $h$ in the case 2, i.e.
$h(y)=K\cdot y+c$. For reader convenience, we denote $N(u,\sigma^{2},K,c)$ as
the random limit normal distribution of $X.$
\end{remark}

\section{Two Example}

\subsection{Test of Hypothesis}

Following the classical work of K. Pearson, we conduct goodness of fit test
for random limit normal distribution.

When use the method to group the data, the number of groups is $m$, while
$n$\ is the sample size. We suppose there are mutually disjoint intervals
$I_{1},\cdots,I_{m}$.%
\begin{equation}
\lambda=%
%TCIMACRO{\tsum \limits_{i=1}^{m}}%
%BeginExpansion
{\textstyle \sum \limits_{i=1}^{m}}
%EndExpansion
\frac{n}{p_{i}}(q_{i}-p_{i})^{2} \label{3.9}%
\end{equation}

where $q_{i}$\ is the frequency of the $i$\ th group, $1\leq i\leq m$, and
$p_{i}$\ is the corresponding probability of the fitting distribution.

K. Pearson proves the following theorem:

\begin{theorem}
If the theoretical distribution is right, when the size of sample
$n\longrightarrow \infty$, the limit distribution of statistic $\lambda$\ is
the $\chi^{2}$\ distribution in $(\Omega,\mathcal{F},P)$\ with $k-1$\ degrees
of freedom.
\end{theorem}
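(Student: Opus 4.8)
The plan is to reduce the statement to a fact about a quadratic form in an asymptotically Gaussian vector. First I would introduce the observed group counts $N_{i}=nq_{i}$, so that under the null hypothesis (the theoretical distribution being correct) the vector $(N_{1},\dots,N_{m})$ is multinomial with parameters $n$ and $(p_{1},\dots,p_{m})$, subject to the constraint $\sum_{i=1}^{m}N_{i}=n$. Rewriting the statistic, $\lambda=\sum_{i=1}^{m}\frac{(N_{i}-np_{i})^{2}}{np_{i}}=\sum_{i=1}^{m}Z_{i}^{2}$ where $Z_{i}=\frac{N_{i}-np_{i}}{\sqrt{np_{i}}}$, so the whole problem becomes understanding the limiting law of the vector $Z=(Z_{1},\dots,Z_{m})$.

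Second, I would apply the multivariate central limit theorem to the multinomial counts. Writing $N_{i}$ as a sum of $n$ i.i.d.\ indicator vectors (each observation lands in exactly one group), the classical CLT gives that $Z$ converges in distribution to a centered Gaussian vector with some covariance matrix $\Sigma$. A direct computation from $\mathrm{Var}(N_{i})=np_{i}(1-p_{i})$ and $\mathrm{Cov}(N_{i},N_{j})=-np_{i}p_{j}$ for $i\neq j$ yields $\Sigma_{ii}=1-p_{i}$ and $\Sigma_{ij}=-\sqrt{p_{i}p_{j}}$, that is, $\Sigma=I_{m}-vv^{\top}$ where $v=(\sqrt{p_{1}},\dots,\sqrt{p_{m}})^{\top}$.

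Third, I would exploit the structure of $\Sigma$. Since $\sum_{i=1}^{m}p_{i}=1$, the vector $v$ has unit length, so $\Sigma=I_{m}-vv^{\top}$ is the orthogonal projection onto the hyperplane $v^{\perp}$; it is idempotent with eigenvalue $1$ of multiplicity $m-1$ and eigenvalue $0$ once. The final step is the lemma that if $Z\sim N(0,\Sigma)$ with $\Sigma$ an orthogonal projection of rank $r$, then $Z^{\top}Z\sim\chi^{2}_{r}$: diagonalizing $\Sigma$ by an orthogonal change of coordinates turns $Z$ into $r$ independent standard normals together with degenerate (almost surely zero) directions, so $\sum Z_{i}^{2}$ is a sum of $r$ squared standard normals. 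Applying this with $r=m-1$, together with the continuous mapping theorem to push the convergence through the continuous map $z\mapsto\|z\|^{2}$, gives $\lambda\overset{d}{\longrightarrow}\chi^{2}_{m-1}$, which is the asserted claim with $k=m$.

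I expect the main obstacle to be the degeneracy of the limit. The constraint $\sum_{i}N_{i}=n$ forces $\Sigma$ to be singular, so the limiting Gaussian is supported on an $(m-1)$-dimensional subspace, and one cannot naively treat $\lambda$ as a sum of $m$ independent squared normals. The essential work lies in recognizing $\Sigma$ as a rank-$(m-1)$ projection and in proving the quadratic-form lemma for a singular Gaussian; this is precisely where the reduction from $m$ to $m-1$ degrees of freedom originates, and it is the step requiring the most care.
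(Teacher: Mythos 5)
Your proposal is correct, and it is the standard proof of Pearson's goodness-of-fit theorem: writing $\lambda=\sum_{i=1}^{m}(N_{i}-np_{i})^{2}/(np_{i})$ with $N_{i}=nq_{i}$ multinomial under the null, applying the multivariate CLT (the Cram\'er--Wold device handles the singular covariance without difficulty) to get $Z\Rightarrow N(0,\Sigma)$ with $\Sigma=I_{m}-vv^{\top}$, $v_{i}=\sqrt{p_{i}}$, recognizing $\Sigma$ as a rank-$(m-1)$ orthogonal projection because $\|v\|=1$, and finishing with your quadratic-form lemma plus the continuous mapping theorem. The comparison with the paper is, however, one-sided: the paper contains no proof of this statement at all --- it is quoted as a classical theorem of K. Pearson (followed by an appeal to Fisher's correction $k-1-r$ when parameters are estimated), so your argument does not diverge from the paper's proof; it supplies the proof the paper omits. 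Two points of hygiene worth making explicit if you write this up: the theorem's ``$k-1$ degrees of freedom'' matches your conclusion $\chi^{2}_{m-1}$ only under the identification $k=m$ (the paper's statement silently switches notation from $m$ groups to $k$), and your reading of $q_{i}$ as the \emph{relative} frequency --- so that $nq_{i}$ is the cell count and $\frac{n}{p_{i}}(q_{i}-p_{i})^{2}=\frac{(N_{i}-np_{i})^{2}}{np_{i}}$ --- is the only reading under which the stated statistic has the claimed limit, and is evidently what is intended. The obstacle you flag, the degeneracy of $\Sigma$ forced by $\sum_{i}N_{i}=n$, is indeed precisely where the reduction from $m$ to $m-1$ degrees of freedom originates, and your projection-plus-diagonalization lemma resolves it correctly. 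Note also that this theorem, as stated and as proved by your route, applies to fixed cells and fixed $p_{i}$; in the paper's actual fitting examples the parameters $\mu,\sigma,K,c$ are estimated from the same data, so strictly speaking it is the Fisher-corrected version (with further caveats about estimation method) rather than this theorem that governs the reported test statistics.
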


If the parameters in theoretical distribution are unknown, according to the
theorem of Fisher, we take the limit distribution of statistic $\lambda$ as
the $\chi^{2}$ distribution in $(\Omega,\mathcal{F},P)$\ with $k-1-r$\ degrees
of freedom. And $r$ is the number of unknown parameters.

\subsection{Afitting Example 1}

For distributions $N(\mu,\sigma^{2})$ and $N(\mu,\sigma^{2},K,c)$, we estimate
$\mu$ and $\sigma$\ by classical method. The fitting result of normal
distribution and random limit normal distribution is given. We use the test
index $\delta$\ and $\delta^{ccn}$\ to compare normal distribution and random
limit normal distribution, $\delta$\ and $\delta^{ccn}$ follow as:%

\begin{equation}
\delta=%
%TCIMACRO{\tsum \limits_{i=1}^{m}}%
%BeginExpansion
{\textstyle \sum \limits_{i=1}^{m}}
%EndExpansion
\frac{n}{p_{i}}(q_{i}-p_{i})^{2} \label{3.10}%
\end{equation}

\begin{equation}
\delta^{ccn}=%
%TCIMACRO{\tsum \limits_{i=1}^{m}}%
%BeginExpansion
{\textstyle \sum \limits_{i=1}^{m}}
%EndExpansion
\frac{n}{p_{i}}(q_{i}-p_{i}^{ccn})^{2} \label{3.11}%
\end{equation}

Where $q_{i}$\ is the frequency of the $i$ th group, $1\leq i\leq m$, $m$\ is
the number of group,\ $n$\ is the number of data, $p_{i}$ is the corresponding
probability of the fitting normal distribution, $p_{i}^{ccn}$\ is the
corresponding value of the fitting random limit normal distribution.

Using $N(\mu,\sigma^{2})$\ to fit the data of S@P 500 2000-2012 Log Return,
and the parameters is%

\[%
\begin{array}
[c]{cc}%
\text{The style of data} & \text{S@P 500}\\
\text{Length of data} & 3311\\
\mu & 1.70610051464811e-005\\
\sigma & 0.0134453931516791\\
\delta & 1129.40451908248
\end{array}
\]
and the fitting figure is

\begin{center}
\includegraphics[width=3.5 in]{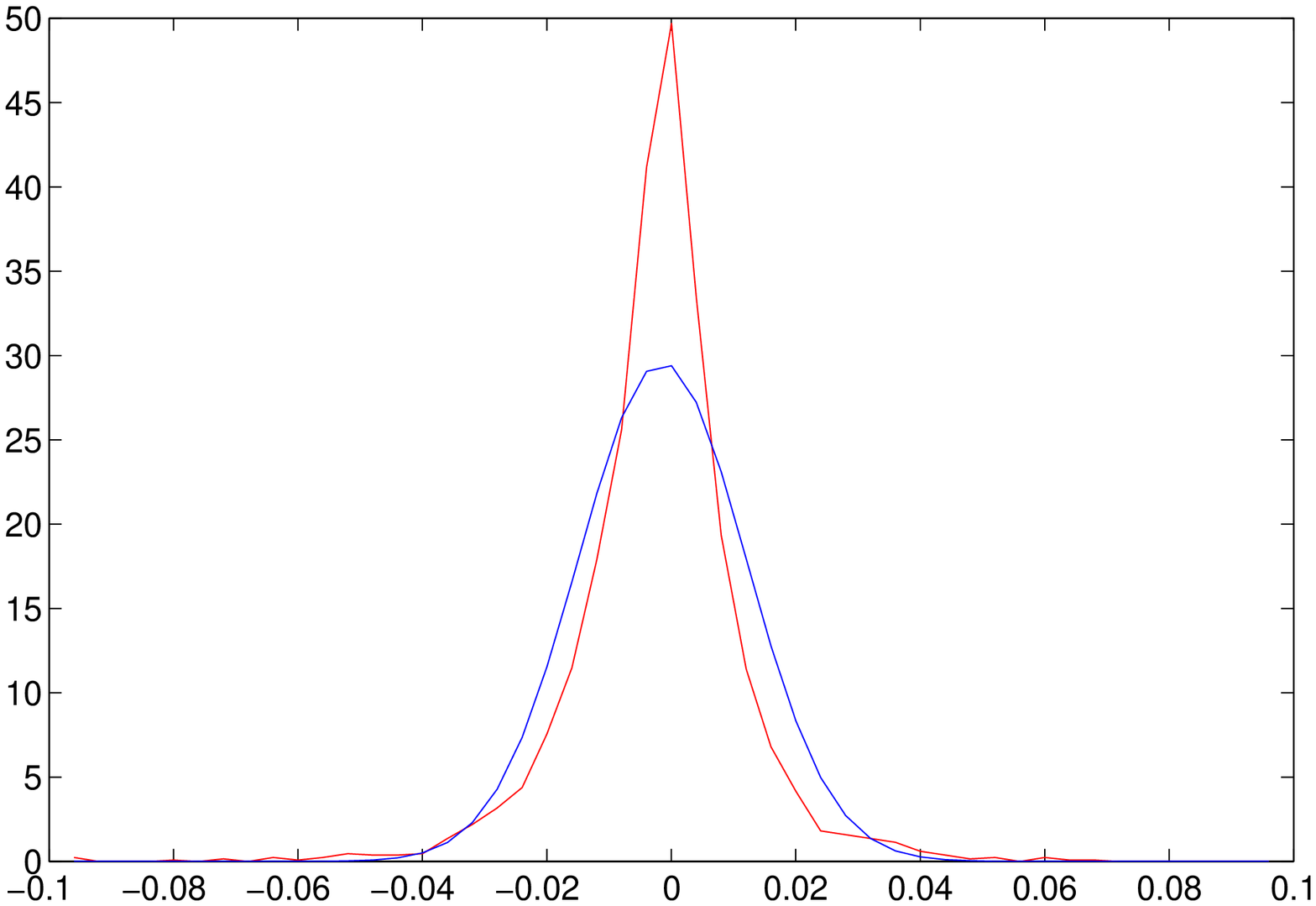}
\end{center}

Using $N(\mu,\sigma^{2},K,c)$\ to fit the data of S@P 500 2000-2012 Log
Return, and the parameters is%
\[%
\begin{array}
[c]{cc}%
\text{The style of data} & \text{S@P 500}\\
\text{Length of data} & 3311\\
\mu & 1.70610051464811e-005\\
\sigma & 0.0134453931516791\\
K & -24(y<0),24(y\geq0)\\
c & 0.5\\
\delta^{ccn} & 349.545332843509
\end{array}
\]
and the fitting figure is

\begin{center}
\includegraphics[width=3.5 in]{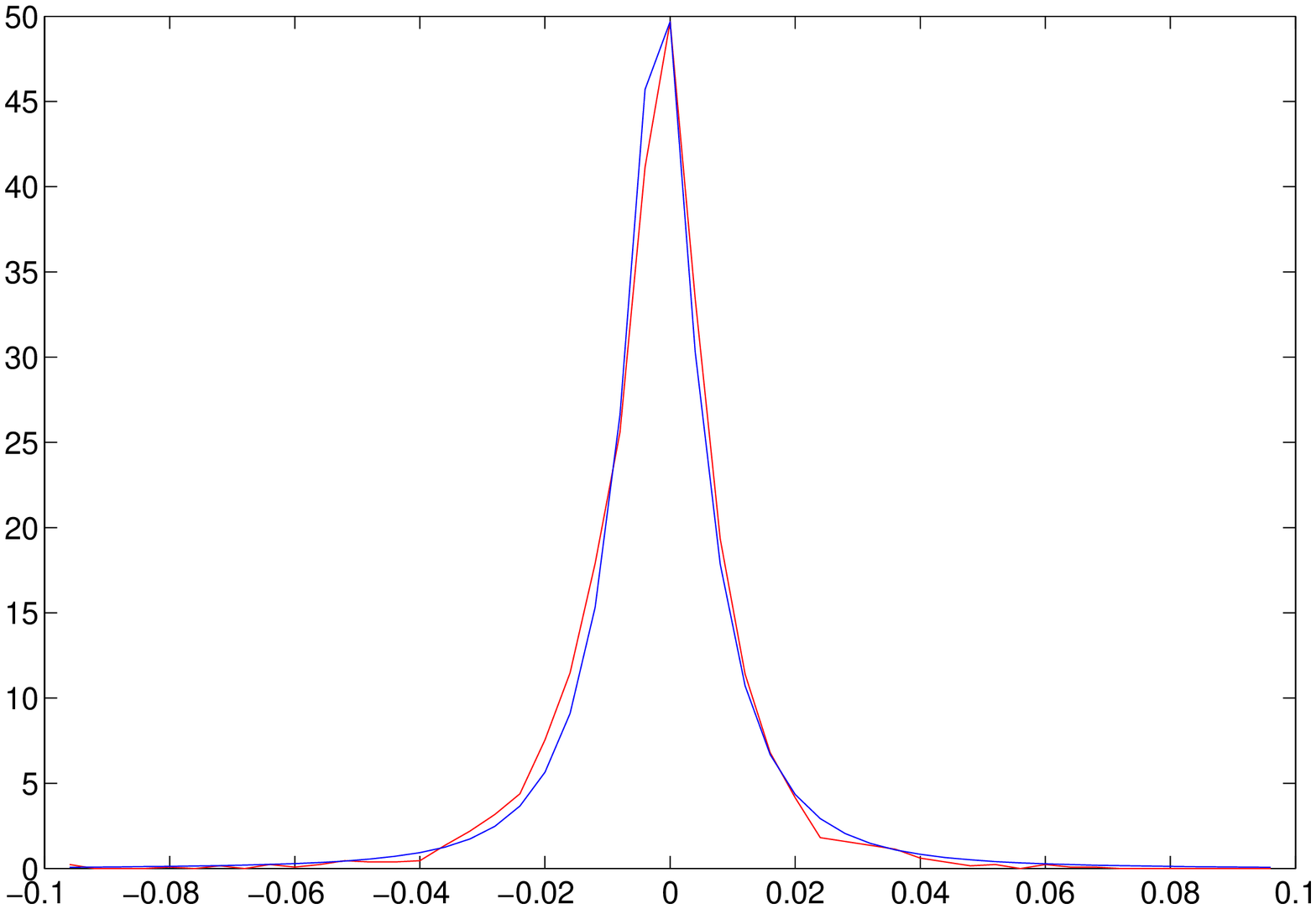}
\end{center}

Using $N(\mu,\sigma^{2})$\ to fit the data of S@P 500 1950-2012 Log Return,
the parameters is%
\[%
\begin{array}
[c]{cc}%
\text{The style of data} & \text{S@P 500}\\
\text{Length of data} & 15852\\
\mu & 1.70610051464811e-005\\
\sigma & 0.0134453931516791\\
\delta & 4830.10362738027
\end{array}
\]
and the fitting figure is

\begin{center}
\includegraphics[width=3.5 in]{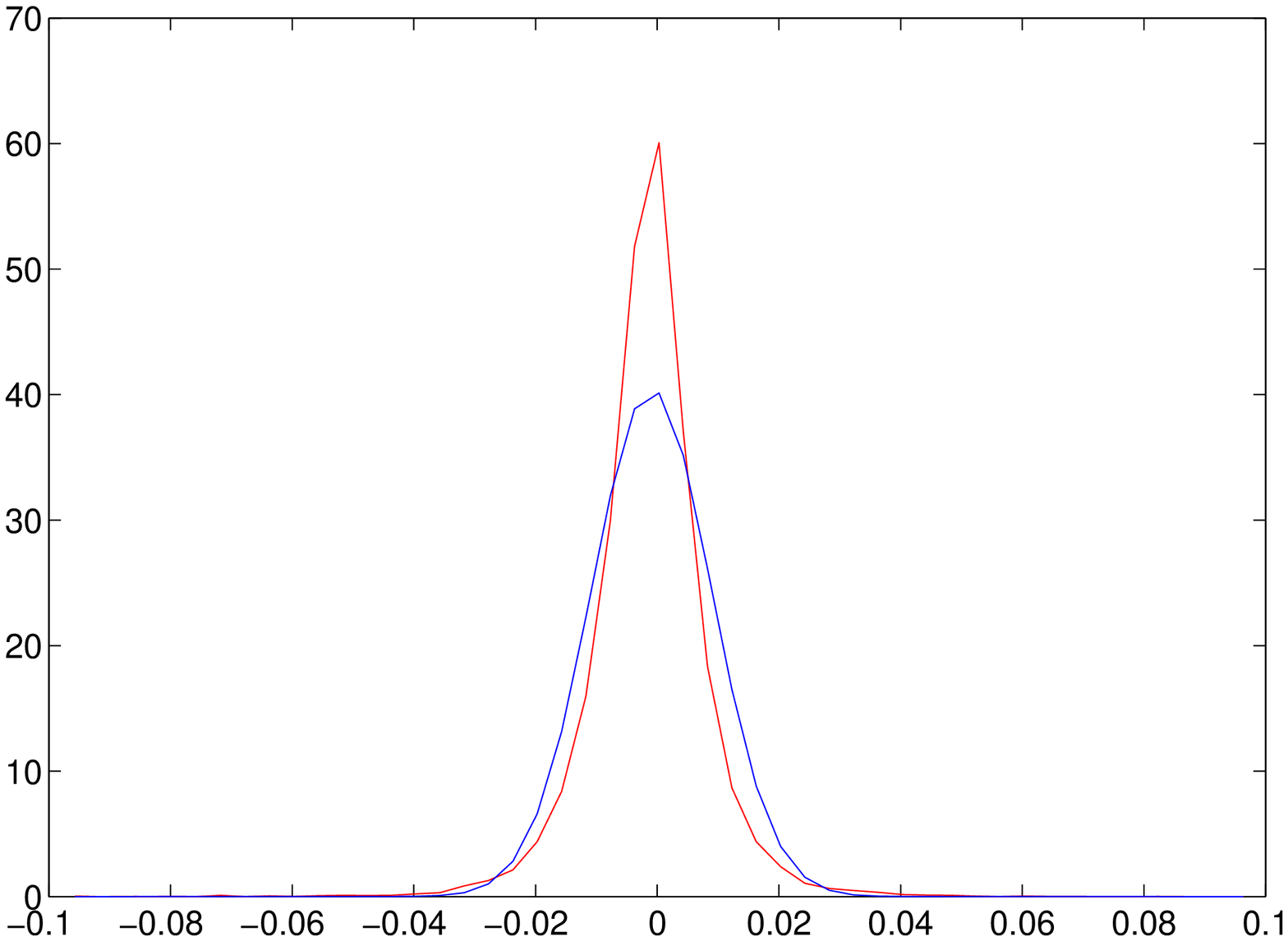}
\end{center}

Using $N(\mu,\sigma^{2},K,c)$\ to fit the data of S@P 500 1950-2012 Log
Return, the parameters is%
\[%
\begin{array}
[c]{cc}%
\text{The style of data} & \text{S@P 500}\\
\text{Length of data} & 15852\\
\mu & 1.70610051464811e-005\\
\sigma & 0.0134453931516791\\
K & -30(y<0),30(y\geq0)\\
c & 0.56\\
\delta^{ccn} & 1201.24543468315
\end{array}
\]
and the fitting figure is

\begin{center}
\includegraphics[width=3.5 in]{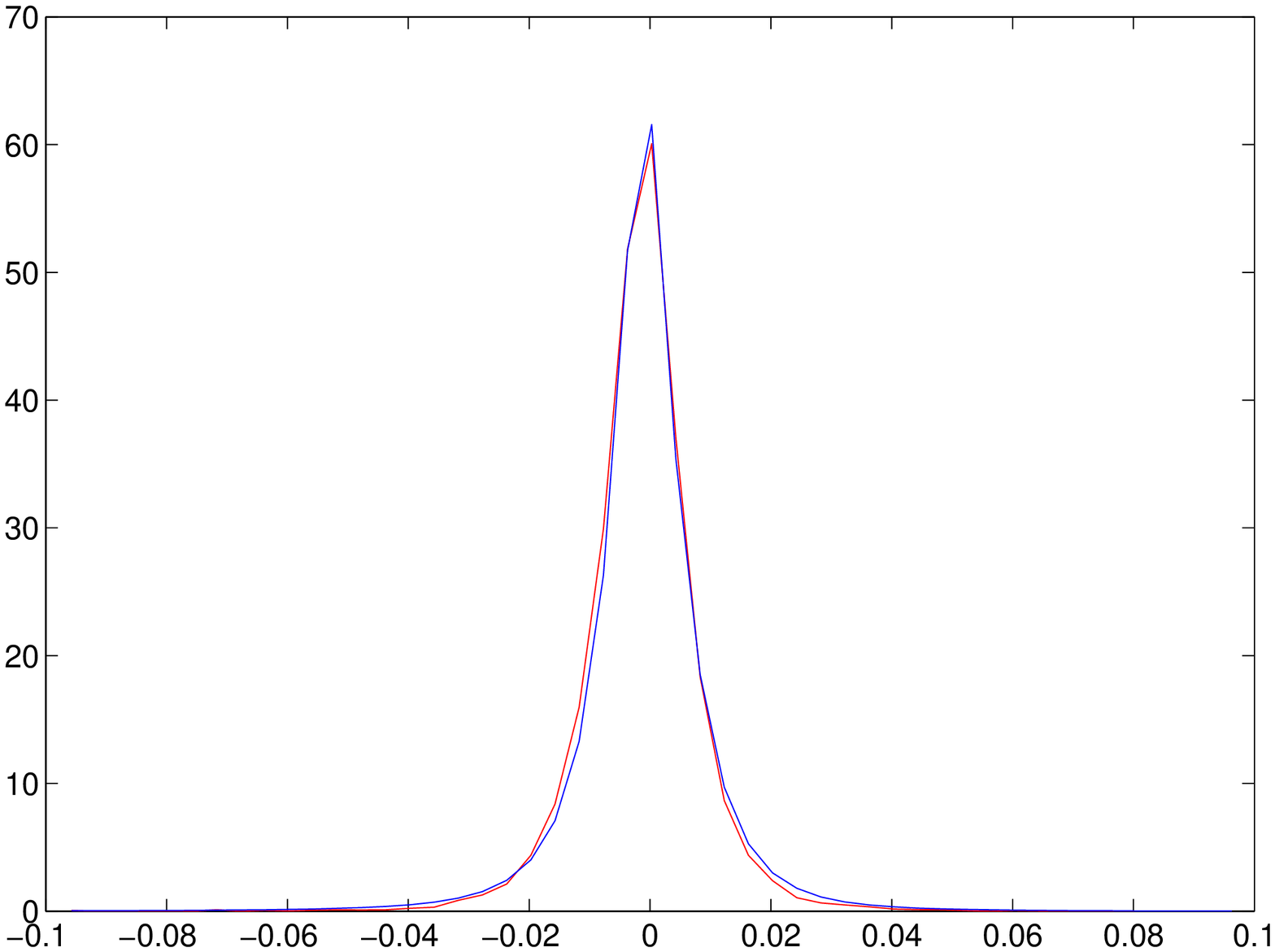}
\end{center}

\subsection{Afitting Example 2}

Using $N(\mu,\sigma^{2})$\ to fit the data of MSFT 1986-2012 Log Return, the
parameters is%
\[%
\begin{array}
[c]{cc}%
\text{The style of data} & \text{MSFT}\\
\text{Length of data} & 15852\\
\mu & 1.70610051464811e-005\\
\sigma & 0.0134453931516791\\
\delta & 2388.80170177018
\end{array}
\]
and the fitting figure is

\begin{center}
\includegraphics[width=3.5 in]{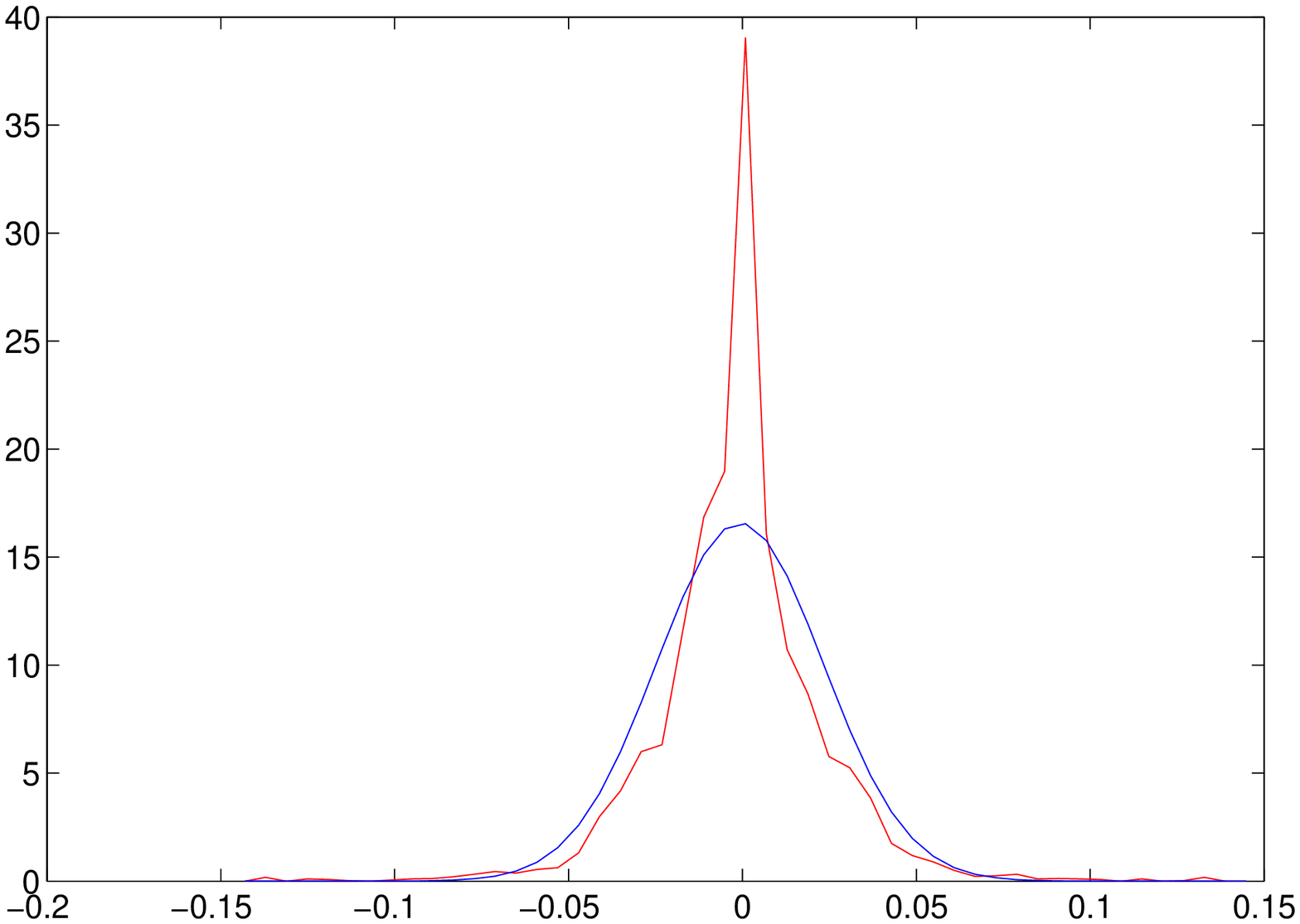}
\end{center}

Using $N(\mu,\sigma^{2},K,c)$\ to fit the data of MSFT 1986-2012 Log Return,
the parameters is%

\[%
\begin{array}
[c]{cc}%
\text{The style of data} & \text{MSFT}\\
\text{Length of data} & 15852\\
\mu & 1.70610051464811e-005\\
\sigma & 0.0134453931516791\\
K & -28.3(y<0),28.3(y\geq0)\\
c & 0.24\\
\delta^{ccn} & 1201.24543468315
\end{array}
\]
and the fitting figure is

\begin{center}
\includegraphics[width=3.5 in]{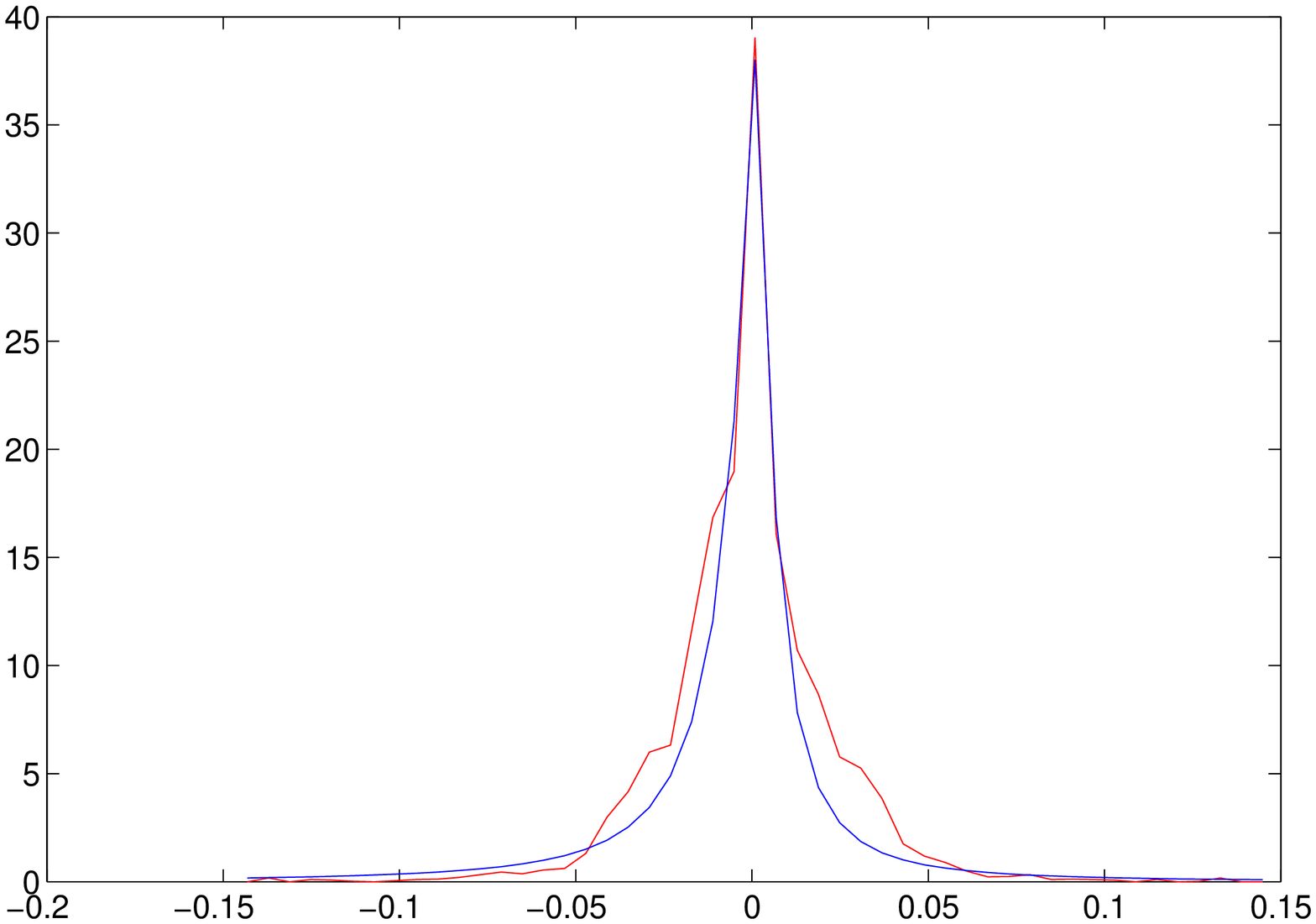}
\end{center}

Recent research is strongly motivated by the financial meltdown to develop
models and methods to better predict extreme events. In this paper, we propose
a new distribution: continuous change normal distribution which includes the
normal distribution as a special case and takes into account of main
statistically relevant stylized facts of asset returns data: fat-tail, high-peak.

\section{Appendix}

\textbf{The proof of case 2 in Theorem \ref{THe2.3}:}

\begin{proof}
case 2: For convenience, we set $\sigma=1.$

when $y\in(-\infty,0],$ $K<0.$ In order to prove $F_{X}(\cdot)$ is a
distribution$,$ we need to show that $F_{X}(\cdot)$ is a continuous and
increasing to $1^{\prime}$s function. By the definition of $F_{X}(\cdot)$, we
have%
\[%
\begin{array}
[c]{rl}%
F_{X}(y)= & \frac{1}{\sqrt{2\pi(Ky+c)^{2}}}\int_{-\infty}^{y}\exp(-\frac
{x^{2}}{2(Ky+c)^{2}})dx.
\end{array}
\]

Considering the right derivative function of $F_{X}(\cdot),$ and denote as
$f_{X}(\cdot).$%
\begin{equation}%
\begin{array}
[c]{rl}%
f_{X}(y)= & \frac{K}{\sqrt{2\pi(Ky+c)^{4}}}\int_{-\infty}^{y}[\exp
(-\frac{x^{2}}{2(Ky+c)^{2}})\cdot(\frac{x^{2}}{(Ky+c)^{2}}-1)]dx\\
& +\frac{1}{\sqrt{2\pi(Ky+c)^{2}}}\exp(-\frac{y^{2}}{2(Ky+c)^{2}}).
\end{array}
\label{4.1}%
\end{equation}

Then, we have%
\[%
\begin{array}
[c]{rl}%
f_{X}(0)= & \frac{1}{\sqrt{2\pi(Ky+c)^{2}}}-\frac{K}{2(Ky+c)}+\frac
{K}{2(Ky+c)}\\
= & \frac{1}{\sqrt{2\pi(Ky+c)^{2}}}\\
> & 0.
\end{array}
\]

Set
\[%
\begin{array}
[c]{rl}%
g_{X}(y)= & \int_{-\infty}^{y}\frac{K}{Ky+c}[\exp(-\frac{x^{2}}{2(Ky+c)^{2}%
})\cdot(\frac{x^{2}}{(Ky+c)^{2}}-1)]dx\\
& +\exp(-\frac{y^{2}}{2(Ky+c)^{2}}).
\end{array}
\]

Set $m(y)=\frac{y}{Ky+c},$ then $m\in(\frac{1}{K},0].$ By parameter
transformation, we have%

\[%
\begin{array}
[c]{rl}%
\tilde{g}_{X}(m)= & \exp(-\frac{m^{2}}{2})+\int_{-\infty}^{m}K[\exp
(-\frac{z^{2}}{2})\cdot(z^{2}-1)]dx.
\end{array}
\]

Note that, $m$ is a increasing to $0^{\prime}$s function of $y$. Also we have
$\tilde{g}_{X}(0)>0.$ The derivative function of $\tilde{g}_{X}(\cdot)$ is%
\[%
\begin{array}
[c]{rl}%
\tilde{g}_{X}^{\prime}(m)= & -\exp(-\frac{m^{2}}{2})\cdot m+K[\exp
(-\frac{m^{2}}{2})\cdot(m^{2}-1)]\\
= & \exp(-\frac{m^{2}}{2})\cdot(Km^{2}-m-K).
\end{array}
\]

For $m\in(\frac{1}{K},0],$ so $\tilde{g}_{X}^{\prime}(\cdot)>0$. Then
$\tilde{g}_{X}(\cdot)$ is increasing function of $m,$ and also $g_{X}(\cdot)$
and $f_{X}(\cdot)$ is a increasing function of $y.$

By the equation (\ref{4.1}), $f_{X}(y)\longrightarrow0,$ when
$y\longrightarrow-\infty.$ So $f_{X}(y)>0$, $y\in(-\infty,0]$, and
$F_{X}(\cdot)$ is a increasing function on $(-\infty,0].$

Similar, we could prove $F_{X}(\cdot)$ is a increasing function on $R,$ and
when $y\longrightarrow+\infty,$ $F_{X}(y)\longrightarrow1.$

This complete the proof.
\end{proof}

\section{References}

\end{document}